	\numberwithin{equation}{section}
	\theoremstyle{plain}
	\newtheorem{exam}{Example}[section]
	\newtheorem{theorem}[exam]{Theorem}
	\newtheorem{lemma}[exam]{Lemma}
\begin{document}

	\sloppy
	\captionsetup[figure]{labelfont={bf},name={Fig.},labelsep=period}
	\captionsetup[table]{labelfont={bf},name={Table},labelsep=period}

	\title{ Bifurcation from a blood flow with variable body force%: via Crandall-Rabinowitz bifurcation theorem
%{Steady periodic blood flow with variable body force and harmonic vorticity}
\footnote{  This paper was jointly supported from Natural Science Foundation of Zhejiang Province (No. LZ24A010006, LZ23A010001), National Natural Science Foundation of China (No. 12571165).}	}
	\author
	{
		Yuchao He$^{a}$\,\,\,\,\,Yongli Song$^{b}$\,\,\,\,
		Yonghui Xia$^{c}\footnote{Corresponding author.   yhxia@zjnu.cn;xiadoc@163.com }$
		\\
		{\small \textit{$^a$ School of Mathematical  Science,  Zhejiang Normal University, 321004, Jinhua, China}}\\
		{\small Email:  YuchaoHe@zjnu.edu.cn; xiadoc@163.com; yhxia@zjnu.cn}
		\\
		{\small \textit{$^b$ School of Mathematics,  Hangzhou Normal University,  311121,  Hangzhou, China}}\\
		{\small Email: songyl@hznu.edu.cn}\\
		{\small \em c. School of Mathematics, Foshan University, Foshan, 528000, China.}\\
	}
	\maketitle
	\begin{abstract}
This paper investigates the existence of periodic solutions in blood flow propagating through vessels with free boundary conditions via the bifurcation theory. 
It is rigorously proved that  a local $C^1$-curve of small-amplitude periodic solutions is bifurcated. In contrast to previous studies on periodic flows that primarily focus on constant vorticity, our work emphasizes the bifurcation analysis of periodic solutions in blood flow with harmonic vorticity and external body forces. To utilize Crandall-Rabinowitz bifurcation theorem, the fundamental challenge lies in reducing a multiple variable-PDE subject to free boundary conditions to a system of one variable-ODE with fixed boundary conditions. 
%In particular, we focus on the dynamic behavior of flows with {harmonic vorticity} and time dependent body force.
	%{In this paper, we study  steady 2D periodic  blood flow propagating along blood vessels with free boundary conditions. In particular, we focus on the dynamic behavior of flows with {harmonic vorticity} and time dependent body force. An equivalent formulation with fix boundary is developed by  utilizing flow force functions. The local bifurcation result is obtained by Crandall-Rabinowitz theorem. The existence of  a local  $C^1$-curve of small-amplitude solution is strictly proved.}
\\
		{\bf Keywords}:  Local bifurcation; bifurcation theory; periodic solutions;  body force;
	\end{abstract}
	
	{\bf MSC}:	34C23; 47J15; 34K18; 58E07
	
	\section{Introduction}

%This paper concerns the bifurcating periodic solution from a blood flow propagating along blood vessels with free boundary conditions. 

	The study of human arterial blood flow model has a long history, it tracks back to the original work by  Euler \cite{euler} in 1775. He proposed a one-dimension inviscid flow with the conservation of mass and momentum. Euler's work was innovative, but regrettably failed  in considering the wave-like
	nature of the flow and finding a solution for his equations.  After 100 years later, Anliker et al.  \cite{anliker} and Skalak \cite{skalak} applied  Riemann's  characteristics method (see \cite{riemann}) to develop   the research on the artistic flow.	
	Up till to 21-th century, the study of the blood flows has once again  attracted widespread attentions. In 2002,  Quarteroni and  Formaggia \cite{book} studied the problem of haemodynamics and 2D, 3D blood flow models. For the convenience of research, the model of blood flow was often seen as an incompressible non-Newtonian fluid with viscosity. %In 2010, Dong \cite{Dong} studied the Cauchy problem to the viscous incompressible micropolar non-Newtonian fluid	flows.  And Dong concerned with the asymmetric stress tensor  in the system.  
In 2015, Friedman \cite{Friedman} et al. considered a free boundary problem for small plaques in the
	artery and  studied the stability of blood flow system with some conditions.
	In 2016, Quarteroni et al. \cite{AAC} used the geometric multiscale approach to deal with the 3D  blood flow problem and emphasize the need for rigorous mathematical models to fill the gap between theory and practice.
	In order to better simulate the dynamic behavior in actual blood flow, Tabi et al. \cite{Tabi} considered  the  impact of  viscosity, radiations, magnetic fields on velocity and temperature on the blood flow.
	And in 2021, Bi et al. \cite{Bi} studied one kind of incompressible blood flow under the weak viscosity condition. A traveling wave solution and some
	properties of it are  given.
	The numerical algorithms on blood flow model   have also attracted a large number of scholars to conduct research,  one can refer  to \cite{  JCP3,striva}. In the systematic monograph on the mathematical modelling
	of the cardiovascular
	system \cite{book}, the authors mentioned that a few  of factors should be incorporated into the blood vessel artery
	based on the basic model. These factors includes axial symmetry, radial displacements, constant pressure, dominance of axial velocity and body forces. Quarteroni and  Formaggia \cite{book} incorporated the body force into the blood model described by a viscous and  incompressible Navier-Stokes equation. Recent years have witnessed significant advances in the study of traveling wave solutions to free boundary flow problems. Within the framework of the full incompressible Navier-Stokes equations, Koganemaru and Tice \cite{tice1} established the existence of traveling wave solutions with Navier slip boundary conditions; subsequently, Stevenson and Tice \cite{tice2} further investigated gravity-driven traveling bore wave solutions. In parallel, for the more simplified shallow water model, Stevenson and Tice systematically analyzed the well-posedness of the traveling wave problem \cite{tice3} and the asymptotic behavior of two-dimensional stationary wave solutions \cite{tice4}.
% as follows:
%	\[
%	\frac{\partial u}{\partial t}+(u\cdot\nabla)u+\nabla p-\mu\Delta u=f,
%	\]
%	where $u$ is the velocity vector,  $\mu$ is the viscosity coefficient,   $p$ is the pressure function and $f$ is the body force. 
For more detailed applications of the body force, one can refer to \cite{Foias}.
	However, most of the above mentioned works are on the numerical computation of the blood flow  model.
	The problem of solving the dynamic equations of blood flow with boundary conditions is rarely solved by scholars. It is known that  if the surface of flow is considered as a free boundary, solving the PDEs becomes particularly complex. An effective way to describe the  fluid dynamics is to  %find the exact nonlinear geophysical Gerstner-like solutions (see Constantin \cite{con2013}) and  study qualitative behaviors (see \cite{constan,chu2,walsh1,walsh2,walsh3}) such as bifurcation and  stability, develop a flow force reformulation (see \cite{cons1,con-strass2,con,con-duke,wang,henry2}) and  
	find laminar solutions (see \cite{chu1,chu2,chu3,AAC}).
	In this paper, we use one kind of generalized Navier-Stokes  equations with viscosity terms and free boundary conditions to describe the dynamic behavior of blood flow in arteries. In particular, we note that the volume force in the blood flow is not constant, but changes with the position and time (For more details on the variable body force, readers can  refer to \cite{Foias}). We pay our particular attentions to the rotational blood flow. %For the rotational or irrotational flow of water waves, one can refer to the ground-breaking works and pioneering endeavors of
%	Constantin and Strauss \cite{con-strass}, Constantin et al. \cite{con-duke}, Constantin and Escher \cite{cons1} made laying a foundation contribution to the works of the rotational water waves. Basu \cite{bis}, Chu \cite{chu1}, Henry \cite{henry} made contributions to the irrotational water waves.
	The blood model with  time dependent body force is formulated as follows:
\begin{equation}\label{eular}\begin{cases}
			u_t+uu_x+vu_y-\mu\Delta u=-\frac{1}{\rho}P_x+f_1(x-ct,y),\\
			v_t+uv_x+vv_y-\mu\Delta v=	-\frac{1}{\rho}P_y-f_2(x-ct,y),
		\end{cases}
	\end{equation}
where $(u(x, y), v(x, y))$  denotes the blood velocity field,  $\mu$ is the viscosity coefficient, $\rho$ is the constant density, $P$ is the pressure, $f_1$ and $f_2$ are non-negative body forces.

	By introducing the flow force function and height transformation, we obtain a class of equivalent equations and corresponding exact laminar solutions. Further, according to  the bifurcation theorem in \cite{c-r}, local bifurcation results have been presented. In particular, we prove the existence of  a local  $C^1$-curve  of
	small-amplitude solution.
	%As far as we know, most of the works  on the dynamic behavior of the  blood flow are numerical  (e.g. \cite{ali,striva,JCP,JCP2,JCP3}).	Different from these works on the numerical computation \cite{ali,striva,JCP,JCP2,JCP3}, 
The aim of this paper is to present the qualitative behaviors of the blood dynamics. Motivated by the works of %Basu \cite{bis},
 Chu \cite{chu1}, %Henry \cite{henry}, 
we  develop a equivalent reformulation and  find the exact solutions, which is interesting from the point of mathematics. This helps us easier to observe the dynamics of the blood flow, consequently, it is beneficial for the treatment of cardiovascular diseases.  It is believed that our method sheds some new light on the study of the blood flow.
	Our reformulation of the blood flow would benefit to design
	numerical  algorithms for the blood flow model. It is noteworthy that the majority of previous studies on steady periodic flows have predominantly focused on scenarios with constant vorticity (e.g., see \cite{chu1,chu2,chu3,wang1,wang2}). In contrast, this paper investigates the bifurcation of periodic solutions in blood flow characterized by harmonic vorticity and body force.

	The outline of the rest of this paper: in Section 2,
	we state a blood flow model with variable body forces, and the problem is equivalent to
	a second order quasi-linear elliptic equation by some transformations. Finally, in Section 3,
	the existence of  a local  $C^1$-curve  of
	small-amplitude solution is strictly proved by Crandall-Rabinowitz bifurcation theorem.
	%The setting for local bifurcation is presented and the transversality condition is proved to ensure that Crandall-Rabinowitz theorem is applicable.

	%For the  study of irrotational flows, we should mention some excellent works on exact steady periodic water waves with vorticity.
	%%Constantin find exact solutions described equatorially trapped waves propagating to the east in the
	%% $\beta$-plane approximation.
	%But we can obtain some excellent properties, like laminar solutions, bifurcation and stability,  similar to Chu et al. \cite{chu1} and Wang et al. \cite{wang} obtained from ocean wave research. We notice that there are many similarities between the form of two-dimensional ocean water waves and blood flow. Hence, the variable substitution method used by  Constantin and  Escher \cite{cons1} can also be applied in blood flow models.

	\section{Equivalent reformulation of blood flow model}
	\subsection{Preliminaries and assumptions}
	In the vascular system, choose $(x,y)\in\Phi\subset \mathbb{R}^2$ and let the $x$-axis be the axial direction of blood vessels and the $y$-axis be the radial direction of blood vessels. In actual vascular fluid models, there are usually two types of situations:  the blood vessels are fully filled with blood; the  blood vessels are not fully filled with blood.	
	If the blood vessels are not fully filled with blood,  the free surface, denoted by $y = \Omega(x,t)$, on the domain bounded above   satisfies
	\begin{equation}\label{free surface}\int_{-\pi}^{\pi}\Omega(x,t)dx=0.\end{equation}
	
	And below by the central axis of blood vessels $y = -d$ with $0 <d< \infty$, the period respect to $x$ is chosen as $2\pi$.
	The situation when the blood flow fully fills the blood vessels can be seen as a special case of the above situation and  $\Omega(x,t)$ reduces to $0$. Without loss of generality, we  only consider the case when the blood vessel is not fully filled in the following (see figure 1).
	\begin{figure}[htbp]
		\begin{minipage}[t]{0.45\textwidth}
			\centering
			\includegraphics[width=\textwidth]{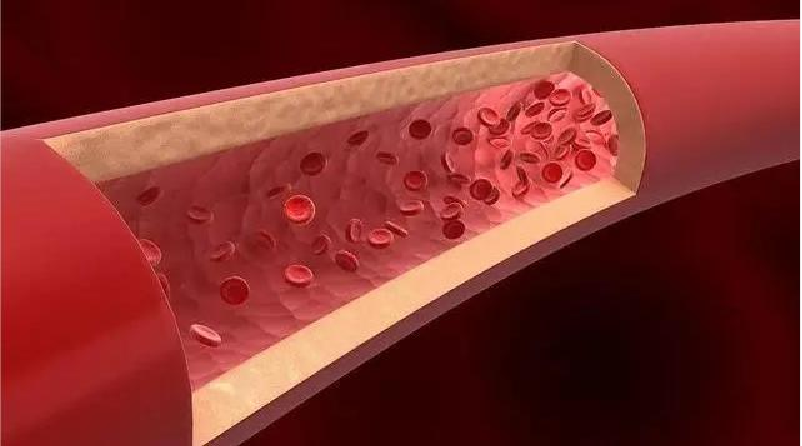}
			\label{fig:img1}
		\end{minipage}\hfill
		\begin{minipage}[t]{0.45\textwidth}
			\centering
			\includegraphics[width=\textwidth]{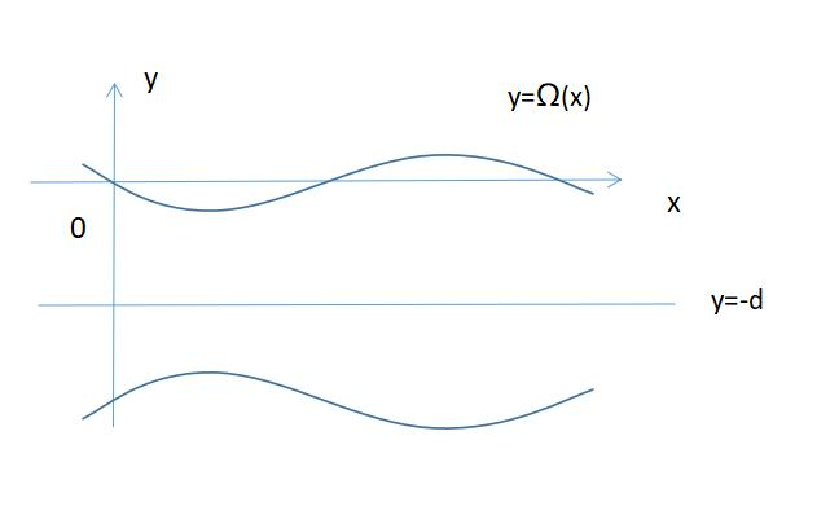}
			\label{fig:img2}
		\end{minipage}
		\caption{Cross section of blood flow in blood vessels.}
		\label{fig:side_by_side}	
	\end{figure}

	Assume that the speed $c$ of traveling waves is positive ($c>0$). That is, the free surface $\Omega(x,t)$ takes the form of $\Omega(x,t) = \Omega(x - ct)$.

	%By traveling wave transformation, we transform $(x - ct, y)$ into  $(x, y)$. 
And  $(u(x, y), v(x, y))$ is used to  denote the blood velocity field in the vascular region $\{(x, y) \in R^2 : -d < y < \Omega(x)\}$. Set that the blood flow has no
	stagnation points, i.e.
	\begin{equation}\label{no	stagnation}
		u-c<0\quad  {\rm for}\quad   -d\leq y\leq\Omega(x),
	\end{equation}
	through blood vessels.  In this paper, we consider the blood flow model \eqref{eular}
	with the condition of mass conservation
	\begin{equation}\label{mass conservation}
		u_x+v_y=0.
	\end{equation}
	The movement of blood flow satisfies the following boundary conditions:
	\begin{equation}
		v=0\quad \rm{on} \ \ y=-d,
	\end{equation}
	and
	\begin{equation}
		P=P_0\quad {\rm on} \ \ y=\Omega(x).
	\end{equation}
	If the blood flow does not fully fill the blood vessels, $v$ satisfies
	\begin{equation}\label{y=eta2}
		v=(u-c)\Omega'(x)\quad \rm{on}\ \  y=\Omega(x).
	\end{equation}
	Otherwise,
	\begin{equation}\label{y=eta}
		v=0\quad \rm{on}\ \  y=\Omega(x).
	\end{equation}
	In the above equations, $\mu$ is the viscosity coefficient, $\rho$ is the constant density, $P$ is the pressure, $f_1$ and $f_2$ are non-negative body forces satisfying the following condition
	\begin{equation}\label{f1f2}
		\frac{\partial f_1}{\partial y}=-\frac{\partial f_2}{\partial x}.
	\end{equation}
	And we assume that for any $x$, the body force $f_1$ in vertical direction satisfies
	\[
	\int_{-d}^{\Omega(x)}f_1(x,y)dy=0,
	\]
	$f_1(x,y)$ vanishes on the boundary: $f_1(x,\Omega(x))=f_1(x,-d)=0.$\\
	We  define the  stream function  $\Phi(x,y)$: $\Phi\to\mathbb R$ by as follows
	\begin{equation}\label{stream}
		\begin{cases}
			\Phi_x=-v,\\
			\Phi_y=u-c.
		\end{cases}
	\end{equation}
	Moreover, $\Phi$ is normalized by choosing $\Phi=0$ on $y=\Omega(x)$ and then $\Phi=m$ on $y=-d$,
	where
	$$m=\int_{\Omega(x)}^{-d}(u(x,y)-c)dy.$$
	It is easy to see that $m$ is a constant by taking the derivative with respect to $ x$. $m$ varies with the solution of \eqref{eular}-\eqref{y=eta}.
	Vorticity  is an important factor in characterizing fluid dynamics behavior. In viscous fluids, since the inseparable relationship between vorticity and viscosity, the vorticity  is often used to study the dynamic behavior of flows \cite{vorticity}.
	Unlike previous studies, our study examines the characteristic flow behavior induced by harmonic vorticity conditions, namely,
	\begin{equation}\label{non-rotation}
		\omega=u_y-v_x,
	\end{equation}
and {
\begin{equation}\label{harmonic}
	\Delta \omega=0.
\end{equation}}
	Combining \eqref{non-rotation} with \eqref{mass conservation} and  taking the traveling wave transform
	$(x-ct, y)\to(x, y)$, we deduce that \eqref{eular} is equivalent to
	{
\begin{equation}\label{stokes}\begin{cases}
		(u-c)u_x+vu_y-\mu\omega_y=-\frac{1}{\rho}P_x+f_1(x-ct,y),\\
		(u-c)v_x+vv_y+\mu\omega_x=	-\frac{1}{\rho}P_y-f_2(x-ct,y),
	\end{cases}
\end{equation}
	since
	\[
	\begin{split}
	\Delta u&= u_{xx}+u_{yy}
	=(-v_y)_x+u_{yy}
	=(u_y-v_x)_y
	=\omega_y
	\end{split}
	\]
	and
	\[
	\Delta v=v_{xx}+v_{yy}=v_{xx}+(-u_x)_y=(v_x-u_y)_x=-\omega_x.
	\]}
	\subsection{Equivalent reformulations}
	Distinctly, according to \eqref{stokes}, the total energy has the following expression{
	\[
	E=\frac{\psi_x^2+\psi_y^2}{2}+\frac{P}{\rho}+\Gamma(\psi)+F+M,
	\]
	where \[F_x=-f_1,\ F_y=f_2,\  M_x=-\mu\omega_y,\  M_y=\mu\omega_x\]
	and
	\[
	\Gamma(p)=\int_0^{p}\gamma(s)ds,
	\]
	with $\Gamma_{max}=\max\limits_{m<p<0}\Gamma(p)\geq0$. And $\gamma$ denotes the vorticity function, which will be explained in detail in the following. The existence of $F(x,y)$ and $M(x,y)$ follows from \eqref{f1f2} and \eqref{harmonic}. }
	
	Based on the analysis above, we obtain that equations \eqref{mass conservation}-\eqref{y=eta} and \eqref{stokes} are
	transformed into the following equations,
\begin{equation}\label{psi}
	\begin{cases}
		{\Delta\psi=\omega,\quad }&	{{\rm for}\quad -d<y<\Omega(x)},\\
		{(\nabla\psi)+2(F+M)=Q,\quad} &	{{\rm on} \quad y=\Omega(x),}\\
		\psi=0, \quad &{\rm on}\quad y=\Omega(x),\\
		\psi=m, \quad&{\rm on}\quad y=-d,
	\end{cases}
\end{equation}
where $Q=2(E-\frac{P_0}{\rho})$ is constant i.e. $\frac{\partial Q}{\partial x}=\frac{\partial Q}{\partial y}=0.$

Let $w=x$, $z=\frac{\psi}{m},$ then
\begin{equation}\label{xy}
	\frac{\partial}{\partial x}=\frac{\psi_x}{m}\cdot\frac{\partial}{\partial z}+\frac{\partial}{\partial w},\quad \frac{\partial}{\partial y}=\frac{\psi_y}{m}\cdot\frac{\partial}{\partial z}.
\end{equation}
\begin{figure}[h]
	\centering
	\includegraphics[scale=0.8]{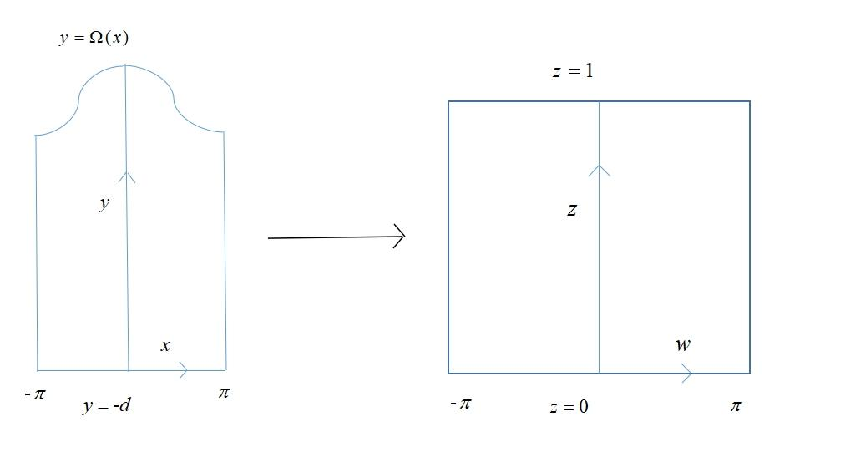}
	\caption{Sketch of the blood flow region transformation.}
	\label{figure}
\end{figure}
 It is worth mentioning that under the new variables, the original free
boundary blood flow region is transformed into a new fixed boundary region, greatly simplifying the
problem. And it is intuitive in the graph 2.

Note that $\omega$ is a function independent of $w$, since 
\begin{equation}\label{w=0}
\frac{\partial}{\partial w}\omega=(\frac{\partial}{\partial x}+\frac{v}{u-c}\frac{\partial}{\partial y})\omega=0.
\end{equation}
There exists a vorticity function $\gamma(p)=\omega$.

Take the height function as
\[
h=y+d.
\]
Based on the definition of height function $h$, we have
\[
\int_{-\pi}^{\pi}h(w,0)dw=2\pi d
\]
and\[
\frac{\partial h}{\partial q}=-\frac{\psi_x}{\psi_y},\quad\frac{\partial h}{\partial p}=\frac{m}{\psi_y}.
\]
Then \eqref{psi} is transformed into
\begin{equation}\label{h}
	\begin{cases}
		{(1+h_w^2)h_{zz}-2h_wh_zh_{zw}+h_{ww}h_z^2=-\frac{\omega(z)}{m}h_z^3,}\\
		{1+h_w^2=\frac{1}{m^2}[Q-2(F+M)]h_z^2,\quad }&	{{\rm on}\quad z=0,}\\
			{h=0,\quad }&	{{\rm on}\quad z=1,}\\
		{	\int_{-\pi}^{\pi}h(w,z)dw=2\pi d,\quad }	&	{{\rm on}\quad z=0.}
		\end{cases}
\end{equation}
In view of \eqref{f1f2} and \eqref{harmonic}, the sum of $M$ and $F$ is expressed by the following  integration which  is independent of path selection.
{
\begin{equation}\label{lujing}
\begin{split}
F(x,y)+M(x,y
)=&\int_{-d}^{h-d}(f_2(w,r)+\mu\omega_w(w,r))dr+\int_{0}^{w}(-f_1(r,-d)-\mu\omega_z(r,-d))dr\\
&+F(0,-d)+M(0,-d).
\end{split}
\end{equation}}

\section{Main result}
\subsection{Laminar solution}

	Firstly, we seek the laminar solutions $H(z)$  to  \eqref{h}, which is independent of $w$.\\
Note that $F$ is the
first integral, it can be corrected by adding or subtracting a constant. Without loss generality, we
let $F(0, -d) = 0$, or else, if $F(0,-d) = {\rm constant}\ne 0$, take $\widetilde{F} = F- {\rm constant}$. Similarly, we assume that $M(0,-d)=0.$ Since we assume $f_1(x,\Omega(x))=0$, we have  $\frac{dF(x,\Omega(x))}{dx}=f_1(x,\Omega(x))=0$ and $F$ is independent of $w$ on $y=\Omega(x)$ or on $z=0$. Set $F_1(y)=F(w,z)\bigg|_{z=0}$,  moreover  $F_1(y)|_{y=\Omega(x)}=F_1(H(0)-d).$\\
In this paper, We assume that the variation of vorticity on the bottom can be ignored, which implies $\int_0^w \mu\omega_z(r,-d)dr=0$. Then combining  \eqref{w=0} with \eqref{lujing}, we obtain 
\[
\begin{split}
[F(x,y)+M(x,y)]\bigg|_{y=\Omega(x)}&=
[F(w,z)+M(w,z)]\bigg|_{z=0}\\
&=\int_{-d}^{h-d}f_2(w,r)dr+\int_{0}^{w}-f_1(r,-d)dr+F(0,-d)\\
&=F_1(H(0)-d).
\end{split}
\]
Consequently, $H(z)$ solves
\begin{equation}
	\begin{cases}
	{H_{zz}=-\frac{\omega(z)}{m}H_z^3,\quad} &{{\rm for}\quad 0<z<1,}\\	{1=\frac{1}{m^2}[Q-2F_1(H(0)-d)]H_z^2,\quad} &{{\rm on} \quad z=0,}\\
	H=0,\quad&{\rm on}\quad z=1,\\
H=d,\quad&{\rm on}\quad z=0.	
\end{cases}
\end{equation}
According to the equations above, we obtain
\begin{equation}
{	H_z=\frac{1}{\sqrt{\xi-2\Gamma(z)}}}
\end{equation}
and the laminar solution of \eqref{h}
\begin{equation}\label{H}
{	H=\int_0^z\frac{1}{\sqrt{\xi-2\Gamma(s)}}ds+d+F_1^{-1}(\frac{Q-\xi m^2}{2}),}
\end{equation}
where { $\xi=\frac{1}{H_z^2(0)}.$}\\
The existence of $F_1^{-1}$ in \eqref{H} follows from the nonnegativity of $f_1$ and $f_2.$\\
And according to the boundary condition $H=d$ on $z=0$ in \eqref{H}, we obtain
\begin{equation}\label{Q}
Q=\xi m^2.
\end{equation}
\subsection{Linearised problem}
Now we seek solutions $h\in  C_{per}^{3,\alpha}$ of the form
\[
h(w,z)=H(z,\xi)+\epsilon g(w,z).
\]
Denote $a(\xi,z)=\sqrt{\xi-2\Gamma(z) }$. In this paper, we assume that there exist a positive constant  $c_4$ such that  $\xi-2\Gamma\geq c_4>0$ to ensure that  $a$ exists.\\
Note that $F_1(x,y)$ has the follow Taylor expansion.
\[
\begin{split}
F(x,y)&=F(0,-d)+\int_{-d}^{h-d}f_2(x,r)dr-\int_0^xf_1(r,-d)dr\\
&=\int_{-d}^{\epsilon f}f_2(x,r)dr\\
&=\int_{-d}^0f_2(x,r)dr+f_2(x,0)f\epsilon+o(\epsilon^2).
\end{split}
\]
Then we obtain that
\begin{equation}\label{f}
	\begin{cases}
		(a^3g_z)_z+ag_{ww}=0,\\
{a\left(Q-2\int_{-d}^0f_2(x,r)dr\right)g_z=f_2(x,0)g,\quad}&{{\rm on}\quad z=0,}\\
g=0,\quad&{\rm on} \quad z=1,
		\end{cases}
\end{equation}
where { $a=\sqrt{\xi-2\Gamma(p)}.$}\\
Note that $g$ is an even function, we have the following equation by using Fourier transform.
\begin{equation}
	g(z,w)=\sum_{k=0}^\infty g_k(z)\cos (kw) \in  C^2_{per}(\bar R_z).
\end{equation}
with the coefficients
\[
g_0(z)=\frac{1}{2\pi}\int_{-\pi}^{\pi}g(w,z)dz\quad {\rm and}\quad \ g_k(z)=\frac{1}{2\pi}\int_{-\pi}^{\pi}g(w,z)cos(kw)dw,\ k\geq 1.
\]
Obviously, $g$ is a solution to \eqref{f} iff for $M$ the following Sturm-Liouville problem is solved by each $g_k \ne 0$.

\begin{equation}\label{3.5}
	\begin{cases}
		(a^3{M}_z)_z=k^2aM,\quad &{\rm for}\quad 0<z<1,\\
	{	a\left(Q-2\int_{-d}^0f_2(x,r)dr\right){M}_z=f_2(x,0)M,\quad}&{{\rm on}\quad z=0,}	\\
		M=0,\quad &{\rm on}\quad z=1.
	\end{cases}
\end{equation}
As a special case, if $k = 0$, the solution $M_0(z)$ of following equations is zero.

\begin{equation}\label{M0}
	\begin{cases}
		(a^3{M_0}_z)_z=0,\quad&{\rm for}\quad 0<z<1,\\
		{a\left(Q-2\int_{-d}^0f_2(x,r)dr\right){M_0}_z=f_2(x,0)M_0,\quad}&{{\rm on}\quad  z=0,}	\\
		M_0=0,\quad &{\rm on }\quad z=1.
	\end{cases}
\end{equation}
{Denote
\[
\mu(\xi)=G(\xi)=\inf_{\zeta\in H^1(0,1),\ \zeta(0)=0,\ \zeta\not\equiv0 }\frac{-\frac{a^3\zeta^2(0)f_2(x,0)}{\xi(Q-2\int_{-d}^0f_2(x,r)dr)}+\int_0^1a^3\zeta_z^2dz}{\int_0^1a\zeta^2dz},
\]}
where $H^1$ is a Hilbert space.
Obviously, a minimizer $M \in C^{3,\alpha}[0, 1]$ of \eqref{3.5} is a classical solution of following equation.
\begin{equation}\label{Mk}
	\begin{cases}
		(a^3M_z)_z=-\mu(\xi)aM,\quad &{\rm  for}\quad 0<z<1,\\
		{\xi\left(Q-2\int_{-d}^0f_2(x,r)dr\right)M_z=f_2(x,0)M, \quad }&{{\rm on}\quad z=0,}\\
		M=0,\quad&{\rm on}\quad z=1.
	\end{cases}
\end{equation}
{We ensure that the solutions to \eqref{3.5} exist, if $\mu(\xi)=-1 $ holds
for some $\xi>0.$
Set $c_1=f_2(x,0)>0$, $c_2=\int_{-d}^0f_2(x,r)dr>0$, $c_3=\frac{2\Gamma_{\max}(Q-2c_2)}{c_1-Q+2c_2}>0$. In this paper, we assume that $c_1, c_2, c_3$ are constants such that $c_1+2c_2>Q>2c_2$  and $Q>2c_3+2\sqrt3c_2$. \\
For the existence of \eqref{3.5}, it is necessary that $\mu(\xi) =-1$ for some $\xi>2\Gamma_{\max}$.\\
First, we prove $\mu\geq-1$, for $2\Gamma_{\max}+c_3\leq\xi\leq-2\Gamma_{\max}+c_3^{\frac{4}{3}}.$
\[
\begin{split}
\int_0^1(a^3\zeta_z^2+a\zeta^2)dz&\geq c_3\frac{1}{2}\int_0^1(a^2\zeta_z^2+\zeta^2)dz\\
&\geq
2c_3\bigg|\int_0^1\zeta_z\zeta dz\bigg|\\
&\geq
c_3\zeta(0)^2\\
&\geq\frac{a^3c_1\zeta^2(0)}{\xi(Q-2c_2)},
\end{split}
\]
which implies
\[
\mu(\xi)\geq-1,
\]
for $2\Gamma_{\max}+c_3\leq\xi\leq-2\Gamma_{\max}+c_3^{\frac{4}{3}}$.}

Moreover, using the similar
method in \cite{c-r,JCP3}, we can show that $\mu(\xi)$ is continuous and a strictly increasing function of
$\xi$ in any interval where it is negative and the solution $\xi^*$ to $\mu(\xi) = -1$ is unique if exists. {Note that we have proved $\mu(\xi)\geq-1$, for $2\Gamma_{\max}+c_3\leq\xi\leq-2\Gamma_{\max}+c_3^{\frac{4}{3}}.$ Therefore, if we are able to show that $\mu(\xi) \leq-1$ for some $\xi > 2\Gamma_{\max}$, then we know that there exists
some $\xi>2\Gamma_{\max}$	 such that $\mu(\xi) = -1$.
Now we prove the following sufficient condition for the vorticity function to ensure that
$\mu(\xi) \leq -1$ for some  $\xi>2\Gamma_{\max}$.
Now we prove that $\mu(\xi)\leq-1$ holds for some $2\Gamma_{\max}+\frac{c_3}{2}>\xi>2\Gamma_{max}$ with a sufficient condition.}{
\begin{lemma}\label{lemma1}
If there exists $\beta\in (3,\infty]$ such that
\[
\frac{c_4^{\frac{3}{2}}c_1}{(2\Gamma_{\max}+\frac{c_3}{2})(Q-2c_2)}>\frac{\sqrt{ 2}}{3}\beta^*||\gamma||_{\beta}^{\frac{2}{3}}p_1^{\frac{3}{2\beta^*}-1}+\frac{2\sqrt2\beta^*}{1+4\beta^*}||\gamma||^{\frac{1}{2}}_\beta p_1^{1+\frac{1}{2\beta^*}},
	\]
	where \[\beta^*=\begin{cases}1+\frac{1}{\beta-1},\quad &{\rm for}\quad 3<\beta<\infty,\\
1,\quad &{\rm for}\quad \beta=\infty,	
\end{cases}\]
\[
p_1=\min\{p\in[m,0]:\Gamma(p)=\Gamma_{\max}\}
\]
and
\[
||\gamma||_\beta=\begin{cases}
	(\int_m^{0}|\gamma(r)|^{\beta}dr)^{\frac{1}{\beta}},\quad
	&{\rm for}\quad 3<\beta<\infty,\\
	\max\limits_{p\in{[m,0]}}|\gamma(p)|,\quad &{\rm for}\quad \beta=\infty,
\end{cases}
\]
then there exists $2\Gamma_{\max}+\frac{c_3}{2}>\xi>2\Gamma_{\max}$ such that $\mu(\xi)\leq-1$.
\end{lemma}}
{\begin{proof}
	Set
	\[
	E(k)=(2||\gamma||_\beta)^{\frac{3}{2}}\frac{k^2}{2k-1+\frac{3}{2\beta^*}}p_1^{\frac{3}{2\beta^*}-1}+(2||\gamma||_\beta)^{\frac{1}{2}}\frac{1}{2k+1+\frac{1}{2\beta^*}}p_1^{1+\frac{1}{2\beta^*}},\quad
	{\rm for}\quad k>\frac{1}{2}.\]
	Since \[\lim\limits_{k\to\frac{1}{2}}E(k)=\frac{\sqrt{ 2}}{3}\beta^*||\gamma||_{\beta}^{\frac{2}{3}}p_1^{\frac{3}{2\beta^*}-1}+\frac{2\sqrt2\beta^*}{1+4\beta^*}||\gamma||^{\frac{1}{2}}_\beta p_1^{1+\frac{1}{2\beta^*}},\] there exists  $k>\frac{1}{2}$ such that $\frac{(2\Gamma_{\max}-2\Gamma(p))^{\frac{3}{2}}c_1}{\Gamma_{\max}(Q-2c_2)}>E(k).$\\
	Let $n\geq2$ and \[
	\phi_n(p)=\begin{cases}
	0,\quad&{\rm for}\quad p_n\leq p\leq m,\\	
(p_n-p)^k,\quad&{\rm for}\quad 0\leq p\leq p_n.	
	\end{cases}
	\]
	where
	\[
	p_n=\frac{n-1}{n}p_1+\frac{m}{n}>0.
	\]
	Utilizing the H\"{o}lder's inequality, we have
	\[
a(p,2\Gamma_{\max})\leq\sqrt{2\Gamma_{\max}-2\Gamma(p)}\leq(2||\gamma||_\beta^{\frac{1}{2}})|p_1-p|^\frac{1}{2\beta^*}.
\]
Note that $\phi_n^2(0)=p,$ then we obtain
\[
\begin{split}
	&\int_0^1a^3(p,2\Gamma_{\max})(\phi_n'(p))^2dp+\int_0^1a(p,2\Gamma)\phi_n^2(p)dp\\
\leq&2\sqrt2||\gamma||_\beta^{\frac{3}{2}}k^2\{\int_0^{p_1}|p_1-p|^{\frac{3}{2\beta^*}}(p_n-p)^{2k-2}dp+\int_{p_1}^{p_n}|p_1-p|^\frac{3}{2\beta^*}(p_n-p)^{2k-2}dp\}\\
&+(2||\gamma||_\beta^\frac{1}{2})\{\int_{0}^{p_1}|p_1-p|^{\frac{1}{2\beta^*}}(p_n-p)^{2k}dp+\int_{p_1}^{p_n}|p_1-p|^{\frac{1}{2\beta^*}}(p_n-p)^{2k}dp\}\\
\leq&\phi_n^2(0)(A_n+B_n),
\end{split}
\]
where
\[
A_n=(2||\gamma_\beta||)^{\frac{3}{2}}\frac{k^2}{2k-1+\frac{3}{2\beta^*}}p_n^{\frac{3}{2\beta^*}-1}+(2||\gamma||_\beta)^\frac{1}{2}\frac{1}{2k+1+\frac{1}{2\beta^*}}p_n^{1+\frac{1}{2\beta^*}},
\]
and
\[
\begin{split}
B_n=&(\frac{1}{2k-1}-\frac{1}{2k-1+\frac{3}{2\beta^*}})(2||\gamma||)^\frac{3}{2}k^2\frac{(p_n-p_1)^{2k-1+\frac{3}{2\beta^*}}}{p_n^{2k}}\\
&+(\frac{1}{2k+1}-\frac{1}{2k+1+\frac{3}{2\beta^*}})(2||\gamma_\beta||)^\frac{1}{2}k^2\frac{(p_n-p_1)^{2k+1+\frac{3}{2\beta^*}}}{p_n^{2k}}.
\end{split}
\]
According to the definition of $\{p_n\}$, we have  $\frac{|p_n-p_1|}{p_n}\leq1$ and $\lim\limits_{n\to\infty} p_n=p_1.$
Hence for any given $\epsilon_0\in(0,\frac{(2\Gamma_{max}-2\Gamma(p))^\frac{3}{2}c_1}{\Gamma_{\max}(Q-2c_2)})$ there exists $N\in\mathbb N$ s.t. for all  $n>N$, following equations hold
\[A_n<\frac{(2\Gamma_{max}-2\Gamma(p))^\frac{3}{2}c_1}{\Gamma_{\max}(Q-2c_2)}-\epsilon,\quad B_n<\epsilon.\]
Hence, there exists $\phi_n\in \mathbb H^1(0,1)$ with $\phi_{n}(m)=0$, $\phi\not\equiv  0$ satisfying that \[
\int_0^1a^3(p,2\Gamma_{\max})(\phi_n'(p))^2dp+\int_0^1a(p,2\Gamma_{\max})\phi_n^2(p)dp<\frac{c_4^\frac{3}{2}c_1}{(2\Gamma_{\max}+\frac{c_3}{2})(Q-2c_2)}\phi_n^2(0)
.\]
Moreover \[
\mu(\xi)\leq\frac{-\frac{c_4^{\frac{3}{2}}c_1}{(2\Gamma_{\max}+\frac{c_3}{2})(Q-2c_2)}\zeta^2(0)+\int_0^1a^3\zeta_z^2dz}{\int_0^1a\zeta^2dz}\leq-1.
\]
 Note that the function $\xi\to\mu(\xi)$ is continuous, there exists $\xi>2\Gamma_{\max}$ such that $\mu(\xi)\leq-1.$
	\end{proof}}
\subsection{Local bifurcation}
%Famous Crandall-Rabinowitz bifurcation theorem will be an important tool for this paper.
In this section, we will apply Crandall-Rabinowitz bifurcation theorem (\cite{c-r}, pp325, Theorem 1.7) to study the local bifurcation of the non-trivial solutions of the blood flow model. Note that Crandall-Rabinowitz bifurcation theorem is a powerful tool to perform the bifurcation analysis (see e.g. \cite{Yi}).  In order to apply Crandall-Rabinowitz bifurcation theorem,
let $B_T=\{(w,z):w\in[-\pi,\pi],z=1\}\}$ be the top boundary and $B_B=\{(w,z):w\in[-\pi,\pi],z=0\}$ be the bottom boundary. And we define the Banach spaces
\[
\mathbb X=\{\chi\in C_{per}^{3,\alpha}(\bar{R}_z):\int_{-\pi}^{\pi}\chi(w,1)=d,\ \chi=0 \ {\rm on}\ B_B\},
\]
\[
\mathbb Y=C_{per}^{1,\alpha}(\bar R_z)\times C^{2,\alpha}_{per}(B_T).
\]
Moreover, system \eqref{h} can be rewritten as
\[
G(\xi,\chi)=0 \quad with\quad \chi\in \mathbb X,
\]
where $G:(\epsilon_0,1)\times \mathbb X\to \mathbb Y$ is given by{
\[
G_1(\xi,\chi)=(1+\chi_w^2)(H_{zz}+\chi_{zz})-2\chi_w\chi_{zw}(H_z+\chi_z)+(H_z+\chi_z)^2\chi_{ww}+\frac{\chi}{m}(H_z+\chi_z)^3
\]
and
\[
G_2(\xi,\chi)=1+\chi_w^2-\frac{1}{m^2}\left[Q-2F\right](H_z+\chi_z)^2\bigg|_{B_{B}}.
\]}
Combing  the equations above with \eqref{Q}, the linearized operator $G_{\chi}(\xi,0)=(G_{1\chi},G_{2\chi})\Bigg|_{\chi=0}$ is given by{
\begin{equation}\label{Ggamma}
	\begin{cases}
		G_{1\chi}(\xi,0)=\frac{\partial^2}{\partial z^2}+H^2_z\frac{\partial^2}{\partial w^2}+\frac{3\chi}{m}H^2_z\frac{\partial}{\partial z},\\
		G_{2\chi}(\xi,0)=[\frac{2f_2(q,0)}{\xi}-\frac{2}{m^2}(\xi m^2-2\int_{-d}^{0}f_2(q,r)dr)\cdot\frac{1}{\sqrt\xi}\cdot\frac{\partial}{\partial z}]\bigg|_{B_T}.
	\end{cases}
\end{equation}}
According to Crandall-Rabinowitz bifurcation theorem (\cite{c-r}, pp325, Theorem 1.7), in what follows, we divide the proof of the local bifurcation into two steps.
Firstly, we prove that the null space $ker\{G_{\chi}(\xi^\star,0)\}$ is one-dimensional. Secondly, we prove the transversality condition
$
[G_{\xi\chi}(\xi^*,0)](1,\zeta^*)\notin R(G_{\chi}(\xi^*,0))$ holds.
\begin{lemma}
	The null space $ker\{G_{\chi}(\xi^\star,0)\}$ is one-dimensional.
\end{lemma}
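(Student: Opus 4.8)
The plan is to reduce the PDE eigenvalue problem to the Sturm--Liouville problem already analysed in Lemmas in the preceding subsection. Since $G_{1\gamma}(\xi^\star,0)$ has constant coefficients in $w$ and we seek even, $2\pi$-periodic functions, I would expand any $m \in \ker\{G_{\gamma}(\xi^\star,0)\}$ in a Fourier cosine series $m(w,z)=\sum_{k\ge 0} m_k(z)\cos(kw)$. Substituting into $G_{1\gamma}(\xi^\star,0)m=0$ and using $H_z^2 = a^{-2}$ (recall $H_z = [\xi^2+2c_1(z_0-z)]^{-1/2}$ and $a=\sqrt{\xi^2+2c_1(z_0-z)}$), together with the identity $\frac{d}{dz}(a^3) = -3c_1 a$, one checks that the equation $m_{k,zz} - k^2 a^{-2} m_k - 3c_1 a^{-2} m_{k,z}=0$ is exactly $(a^3 m_{k,z})_z = k^2 a m_k$ after multiplying through by $a^3$; likewise $G_{2\gamma}(\xi^\star,0)m=0$ on $B_T$ decouples into the Robin condition $\xi(\tfrac{Q}{2}-c_3)m_{k,z}=c_2 m_k$ at $z=z_0$, and $m=0$ on $B_B$ gives $m_k(0)=0$. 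Thus each $m_k$ solves the problem \eqref{3.5} with parameter $k$.

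Next I would invoke the spectral facts established earlier. For $k=0$, equation \eqref{m_0} forces $m_0\equiv 0$ (this was stated when \eqref{m_0} was introduced; it follows from the energy identity $\int_0^{z_0} a^3 m_{0,z}^2\,dz = 2c_1 a(z_0) m_0(z_0)^2$ combined with the boundary relation, or simply because $\mu(\xi)=0\ne -1$). For $k\ge 2$, I would argue that no nontrivial solution exists: the variational characterisation \eqref{3.7} shows that the relevant eigenvalue for the $k$-mode is $k^2$ times the Rayleigh-type quotient, and since $\mu(\xi^\star)=-1$ is the infimum associated with the $k=1$ normalisation, the comparison $k^2 > 1$ for $k\ge 2$ pushes the corresponding eigenvalue strictly away from the value needed for solvability (monotonicity of the quadratic form in $k^2$, exactly as in \cite{chu1}). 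Hence only the $k=1$ mode can contribute, and for $k=1$ the solution space of \eqref{Mk} with $\mu(\xi^\star)=-1$ is one-dimensional because it is a second-order linear ODE with two homogeneous boundary conditions whose solution, once normalised by $M(\xi^\star,z_0)=1$, is the unique $\zeta(\xi^\star,z,-1)$ from \eqref{3.11}. Therefore $\ker\{G_\gamma(\xi^\star,0)\}$ is spanned by the single function $M(\xi^\star,z)\cos w$.

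I would then assemble these observations: any element of the kernel is $m(w,z)=M(\xi^\star,z)\cos w$ up to scalar multiples, so $\dim \ker\{G_\gamma(\xi^\star,0)\}=1$, and I should also note that this $m$ indeed lies in $\mathbb X$ — it is even and $2\pi$-periodic in $w$, vanishes on $B_B$, and the constraint $\int_{-\pi}^{\pi}m(w,z_0)\,dw = 0$ is automatic since $\int_{-\pi}^{\pi}\cos w\,dw=0$, consistent with the affine constraint defining $\mathbb X$ (the inhomogeneous part $d$ is carried by $H$). The regularity $M\in C^{3,\alpha}[0,z_0]$ needed so that $m\in C^{3,\alpha}_{per}(\bar R_z)$ was already obtained in the proof of the first Lemma by the elliptic-regularity bootstrap argument of \cite{chu1}.

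The main obstacle I anticipate is the exclusion of the higher Fourier modes $k\ge 2$: one must show cleanly that $\mu_k(\xi^\star) > -1$ for $k\ge 2$ where $\mu_k$ denotes the infimum of the $k$-analogue of \eqref{3.7}. The key point is that the numerator of the Rayleigh quotient for mode $k$ is $-\frac{c_3}{\xi(\frac{Q}{2}-c_2)}a^3(z_0)\zeta^2(z_0) + \int_0^{z_0}a^3\zeta_z^2\,dz$ plus the term $k^2\int_0^{z_0} a\zeta^2\,dz$ weighted against the same denominator, so that $\mu_k(\xi) = \mu_1(\xi) + (k^2-1)$ in the appropriate normalisation; since $\mu_1(\xi^\star)=-1$, we get $\mu_k(\xi^\star) = k^2 - 2 \ge 2 > -1$ for $k\ge 2$, and solvability of \eqref{3.5} for mode $k$ requires exactly $\mu_k(\xi^\star) = -1$, a contradiction. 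Making this bookkeeping precise — in particular being careful about how $k^2$ enters the quadratic form versus the eigenvalue — is the one place where a slip is easy, so I would write that comparison out in detail; everything else is routine ODE theory and Fourier decoupling.
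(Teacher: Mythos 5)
Your overall strategy --- Fourier cosine decomposition, reduction of each mode to the Sturm--Liouville problem \eqref{3.5}, exclusion of the modes $k\ge 2$ by comparison with the variational infimum $\mu(\xi^\star)=-1$, and identification of the $k=1$ mode with a multiple of the normalized eigenfunction $M$ --- is exactly the paper's. The genuine gap is your treatment of the $k=0$ mode. Neither of your justifications closes it: the energy identity you quote has both sides nonnegative, so it does not force $m_0\equiv 0$; the remark ``$\mu(\xi)=0\ne -1$'' is a non sequitur, because $\mu(\xi^\star)=-1$ is only the \emph{minimum} of the quotient \eqref{3.7}, and the value $0$ (which is what the $k=0$ mode would produce) is not excluded by the bound $\mathbb G\ge -1$; and appealing to the sentence following \eqref{m_0} is circular, since the paper merely asserts the claim there and proves it only inside this lemma. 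The paper's actual argument for $k=0$ is the one you omitted: from $(a^3 m_{0,z})_z=0$ and $m_0(0)=0$ one gets $m_0(z)=A_0\int_0^z a^{-3}(r,\xi^\star)\,dr$, and then the integral constraint built into the space $\mathbb X$, which for the perturbation reads $\int_{-\pi}^{\pi} m(w,z_0)\,dw=2\pi m_0(z_0)=0$, yields $A_0\int_0^{z_0}a^{-3}(z,\xi^\star)\,dz=0$; since $\int_0^{z_0}a^{-3}(z,\xi^\star)\,dz\neq 0$, this forces $A_0=0$ and hence $m_0\equiv 0$. You invoke that constraint only to verify that $M(z)\cos w$ lies in $\mathbb X$, but it is precisely the ingredient needed to kill the zero mode; without it your kernel could a priori contain a $w$-independent element.

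A secondary point: your final ``precise'' bookkeeping for $k\ge 2$ (namely $\mu_k(\xi)=\mu_1(\xi)+(k^2-1)$, $\mu_k(\xi^\star)=k^2-2$, and ``solvability requires $\mu_k(\xi^\star)=-1$'') is internally inconsistent: if one shifts the form by $k^2$, then $\mu_k=\mu+k^2$ with the paper's $\mu$, and non-solvability follows from $\mu_k>0$, not from $\mu_k\neq -1$. The clean statement, which is the paper's, is direct: if $m_k\not\equiv 0$ solves \eqref{3.5} for some $k\ge 2$, then multiplying by $m_k$, integrating by parts and using the Robin condition shows that the quotient in \eqref{3.7} evaluated at $m_k$ equals $-k^2\le -4<-1=\mu(\xi^\star)$, contradicting that $\mu(\xi^\star)$ is the infimum. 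Your second paragraph already gestures at this; drop the $\mu_k$ arithmetic and write the contradiction with the infimum directly.
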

\begin{proof}
	Based on Lemma \ref{lemma1}, there exists a solution $\xi^{\star}$ of the equation $\mu(\xi)=-1$ with $\xi>2\chi_{\max}.$ Moreover, \eqref{Mk} has normalized ($M(1)=1$) eigenfunction  $M\in C^{3,\alpha}[0,1]$ such that $\mu(\xi)=-1$.  Then there exists at least one element $\chi(w,z)=M(z)\cos w$ in the space  $ker\{G_\chi(\xi^\star,0)\}$.\\
	By assuming that $m \in X$ is in the null space $ker{G_\chi(\xi^{\star}, 0)}$, we obtain that
	$m_k$ solve the equation \eqref{3.5}. Therefore, $m_1(z)$ is a constant multiple of $M(z)$. Moreover, if $m_k\ne0$ for some $k>2$,
	then
	{\[
	\frac{-\frac{a^3m_k^2(0)f_2(x,0)}{\xi(Q-2\int_{-d}^0f_2(x,r)dr)}+\int_0^1a^3(\frac{\partial m_k}{\partial z})^2dz}{\int_0^1am_k^2dz}<-1.
	\]}
	This is a contradiction, because a minimum is attained corresponding to $\mu(\xi^\star) = -1$. And
	from the differential equation and from the boundary condition at $z = 0$ in \eqref{M0} with $k = 0$ we
	get that
	\[
	m_0(z)=A_0\int_0^za^{-3}(r,\xi^{\star})dr,\quad z\in[0,1]
	\]
	holds for some $A_0\in \mathbb R$. Note that  $\int_\pi^\pi m_0(1)dw=2\pi m_0(1)=0$ and $A_0\int_0^{1}a^{-3}(z,\xi^\star) dz=0.$ And it is not difficult to show that $\int_0^{1}a^{-3}(z,\xi^{\star})dz\ne 0$ by direct computation. Hence $A_0=0$ and $m_0=0$.
\end{proof}

\begin{lemma}
	The following transversality condition holds.
	\[
	[G_{\xi\chi}(\xi^*,0)](1,\zeta^*)\notin R(G_{\chi}(\xi^*,0)).
	\]
\end{lemma}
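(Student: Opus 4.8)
The plan is to make the codimension-one range of $G_\gamma(\xi^\star,0)$ explicit enough to recognise the element produced by $G_{\xi\gamma}$, thereby reducing the non-membership to the strict monotonicity $\dot\mu(\xi^\star)>0$ established above. Throughout write $\zeta^\star(w,z)=M(z)\cos w$, where $M$ is the normalised ($M(z_0)=1$) eigenfunction corresponding to $\mu(\xi^\star)=-1$, so that $\ker G_\gamma(\xi^\star,0)=\mathrm{span}\{\zeta^\star\}$ by the preceding lemma; recall $H_z=a^{-1}$ with $a=a(\cdot,\xi)=\sqrt{\xi^2+2c_1(z_0-z)}$ and $\xi^\star>2(c_2+c_3)$.

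\emph{Step 1: the element $(H_z^2M\cos w,\,0)$ does not lie in $R(G_\gamma(\xi^\star,0))$.} Suppose $G_\gamma(\xi^\star,0)v=(H_z^2M\cos w,\,0)$ for some $v\in\mathbb X$. Taking first Fourier-cosine coefficients in $w$, its mode-$1$ part $v_1\in C^{3,\alpha}[0,z_0]$ solves $(a^3v_{1,z})_z-av_1=aM$ on $(0,z_0)$ with $v_1(0)=0$ and the homogeneous top condition $\tfrac{2c_2}{\xi^\star}v_1(z_0)=(\xi^\star-2c_3)v_{1,z}(z_0)$. Since $\mu(\xi^\star)=-1$, the eigenfunction $M$ solves the homogeneous problem $(a^3M_z)_z-aM=0$, $M(0)=0$, with the same top condition. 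Multiplying by $M$, integrating over $(0,z_0)$ and applying Lagrange's identity gives
\[
\int_0^{z_0}aM^2\,dz=\bigl[a^3\bigl(Mv_{1,z}-v_1M_z\bigr)\bigr]_0^{z_0}=0,
\]
because the boundary term vanishes at $z=0$ (there $M=v_1=0$) and at $z=z_0$ ($M$ and $v_1$ obey the same Robin relation, and $\xi^\star-2c_3\neq0$). Since $M\not\equiv0$, this is a contradiction.

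\emph{Step 2: differentiating an identity along the laminar branch.} For $\xi$ near $\xi^\star$, insert $M(\cdot,\xi)\cos w$ into $G_\gamma(\xi,0)$. Substituting $(a^3M_z)_z=-\mu(\xi)aM$ into $M_{zz}-H_z^2M-3c_1H_z^2M_z$ shows the first component $G_{1\gamma}(\xi,0)[M(\cdot,\xi)\cos w]$ equals $-(\mu(\xi)+1)H_z^2(\cdot,\xi)M(\cdot,\xi)\cos w$, while the second component vanishes identically because $M(\cdot,\xi)$ satisfies the linearised top condition $\tfrac{2c_2}{\xi}M(z_0)=(\xi-2c_3)M_z(z_0)$ for every $\xi$ on the branch. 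Hence $G_\gamma(\xi,0)[M(\cdot,\xi)\cos w]=(-(\mu(\xi)+1)H_z^2(\cdot,\xi)M(\cdot,\xi)\cos w,\,0)$. Differentiating in $\xi$ at $\xi^\star$ — legitimate because $\xi\mapsto(\mu(\xi),M(\cdot,\xi))$ is real-analytic by the implicit function theorem argument in the monotonicity lemma — and using $\mu(\xi^\star)+1=0$, every contribution on the right except the one in which $\partial_\xi$ hits $\mu(\xi)+1$ drops out, so
\[
[G_{\xi\gamma}(\xi^\star,0)](1,\zeta^\star)+G_\gamma(\xi^\star,0)\bigl[\dot M\cos w\bigr]=-\dot\mu(\xi^\star)\bigl(H_z^2M\cos w,\,0\bigr),
\]
with $\dot M=\partial_\xi M(\cdot,\xi)\big|_{\xi^\star}$. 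Observe that the $\xi$-derivatives of $a$, $H_z$ and the boundary coefficients never enter the final identity, since the only surviving term multiplies $\mu(\xi^\star)+1=0$; this makes the argument insensitive to those (including any $\xi$-dependence of $z_0$).

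\emph{Step 3 and the main obstacle.} Since $G_\gamma(\xi^\star,0)[\dot M\cos w]\in R(G_\gamma(\xi^\star,0))$, were $[G_{\xi\gamma}(\xi^\star,0)](1,\zeta^\star)$ in $R(G_\gamma(\xi^\star,0))$ then so would be $\dot\mu(\xi^\star)(H_z^2M\cos w,0)$; as $\dot\mu(\xi^\star)>0$ by the previous lemma, this contradicts Step 1. Hence $[G_{\xi\gamma}(\xi^\star,0)](1,\zeta^\star)\notin R(G_\gamma(\xi^\star,0))$, which together with the one-dimensional kernel lets the Crandall--Rabinowitz theorem furnish the local $C^1$-curve of small-amplitude solutions. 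The delicate point I expect is the Banach-space regularity underlying Step 2: showing $\xi\mapsto M(\cdot,\xi)$ is $C^1$ as a map into $C^{3,\alpha}[0,z_0]$ (so that $\dot M\cos w$ is an admissible element of $\mathbb X$ and the identity may be differentiated in $\mathbb Y$ by the product rule), and — if $z_0$ is treated as $\xi$-dependent through \eqref{s_0} — performing a preliminary rescaling of $z$ onto a fixed interval so that all objects live in fixed spaces. Steps 1 and 3 are short once the already-claimed Fredholm-of-index-zero property guarantees $v_1\in C^{3,\alpha}$.
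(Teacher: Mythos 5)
Your argument is correct, but it is not the route the paper takes. The paper verifies the transversality head-on: it computes $G_{\xi\gamma}(\xi^*,0)$ explicitly from \eqref{Ggamma}, pairs the resulting element against the kernel function $\zeta^*=M\cos w$ with the weight $a^3$ in $R_z$ plus the matching boundary terms on $B_T$ (this pairing annihilates $R(G_\gamma(\xi^*,0))$ by the self-adjoint structure), integrates by parts using the equation and Robin condition satisfied by $M$, and then shows the resulting expression is strictly positive by the averaging and Cauchy--Schwarz inequalities, the positivity being guaranteed precisely by $\xi^\star>2(c_2+c_3)$. You instead differentiate the eigen-branch identity $G_\gamma(\xi,0)[M(\cdot,\xi)\cos w]=\bigl(-(\mu(\xi)+1)H_z^2M\cos w,\,0\bigr)$ at $\xi^\star$, reduce transversality to $\dot\mu(\xi^\star)>0$ (available from the monotonicity lemma, since $\mu(\xi^\star)=-1$ and $\xi^\star>2(c_2+c_3)$), and exclude $(H_z^2M\cos w,0)$ from the range by the Lagrange-identity/Fredholm-alternative computation on the first Fourier mode; both your Steps 1 and 2 check out against \eqref{Mk} and \eqref{Ggamma}, including the vanishing of the second component along the branch. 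What each approach buys: the paper's computation is entirely local at $\xi^\star$ and needs no differentiability of the eigenfunction branch, at the price of an explicit (and somewhat laborious) integral estimate; yours recycles the already-proved monotonicity $\dot\mu>0$ and avoids that estimate, at the price of needing $\xi\mapsto M(\cdot,\xi)$ to be $C^1$ into $C^{2}[0,z_0]$ and of the domain issue $z_0=z_0(\xi)$ from \eqref{s_0} --- caveats you flag yourself, and which are no worse than what the paper already assumes implicitly, since its own \eqref{3.13} differentiates $M$ in $\xi$ with $z_0$ treated as fixed; your proposed rescaling onto a fixed interval would make that step honest in both treatments.
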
{
\begin{proof}
	Keep in mind that $a_z=-\chi(p)a^{-1}$,
	where $a=a(\cdot,\xi^*). $\\ It follows from \eqref{Ggamma} that
	\[
	G_{\xi \chi}(\xi^*,0)=-\left( a^{-4}\frac{\partial^2}{\partial w^2}+3a_za^{-3}\frac{\partial}{\partial z},\left(\frac{2f_2(q,0)}{(\xi^*)^2}+(\xi^*)^{-\frac{1}{2}}\frac{\partial}{\partial z}+\frac{2\int_{-d}^{0}f_2(q,r)dr}{m^2}\cdot(\xi^*)^{-\frac{3}{2}}\frac{\partial}{\partial z}\right)\bigg|_{B_T}\right).
	\]
	Lemma is equivalent to proof
	\begin{equation}\label{ne}
		\begin{split}
		&\iint_{R_z}a^3\zeta^*2\xi^*(a^{-4}\frac{\partial^2\zeta^*}{\partial w^2}+3a^{-3}a_z\frac{\partial\zeta^*}{\partial z})dwdz
		\\&+\int_{B_T}[2(\zeta^*)^2f_2(q,0)+(\xi^*)^{\frac{3}{2}}\zeta^{*}\zeta_z^{*}+\frac{2\int_{-d}^0f_2(q,r)dr}{m^2}(\xi^*)^{\frac{1}{2}}\zeta^*\zeta^*_z]dw\ne0.
		\end{split}
	\end{equation}
	Note that $\zeta^*(w,z)=M(z)\cos(w)$. Since $M\ne0$ solves \eqref{Mk} with $\mu=-1$, and $a(1) = \xi^*$, then we obtain
	\begin{equation}
		\begin{cases}
			a^3\zeta_{zz}^*+3a^2a_{z}\zeta_z^*-a\zeta^*=0,\quad&{\rm for}\quad 0<z<1,\\
			\xi^*(Q-2c_3)\zeta^*_z=c_2\zeta^*,\quad&{\rm on }\quad z=1,\\
			\zeta^*=0,\quad &{\rm on}\quad z=0.
		\end{cases}
	\end{equation}
	Hence, we obtain
	\[
	\begin{split}
		\iint_{R_z}a_z\zeta^*\zeta_z^*dzdw&=\int_{B_T}a\zeta^*\zeta_z^*dw-\iint_{R_z}a\zeta^*\zeta_{zz}^*dwdz-\iint_{R_z}a(\zeta^*_z)^2dwdz\\
		&=\int_{B_T}a\zeta^*\zeta^*_zdw+\iint_{R_z}\zeta^*(3a_z\zeta^*_z-a^{-1}\zeta^*)dwdz-\iint_{R_z}a(\zeta^*_z)^2dwdz.
	\end{split}
	\]
	Furthermore,
	\[
	\iint_{R_z} a_z\zeta^*\zeta^*_zdwdz=-\frac{1}{2}\int_{B_T}a\zeta^*\zeta^*_zdw+\frac{1}{2}\iint_{R_z}\left({a^{-1}(\zeta^*)^2+a(\zeta_z^*)^2}\right)dwdz.
	\]
	Keep in mind $\zeta_{ww}=-\zeta$ and $a(1)=\xi$. And form \eqref{ne}, we need to show that
	\[
	-2\xi^*\iint_{R_z}a^{-1}(\zeta^*)^2 dwdz+3\xi^*\iint_{R_z}\left({a^{-1}(\zeta^*)^2+a(\zeta_z^*)^2}\right)dwdz+\int_{B_T}2c_2(\zeta^*)^2dw-2\xi^*\int_{B_T}a\zeta^*\zeta^*_zdw\ne 0,
	\]
	which is equal to prove
	\[
	\begin{split}
	\iint_{R_z}\xi^* &a^{-1}{\zeta^*}^2dwdz+3\xi^*\iint_{R_z}a(\zeta_z^2)dwdz-3\int_{B_T}\frac{ac_2(\zeta^*)^2}{\frac{Q}{2}-c_3}dw
	\\+&\int_{B_T}2(\zeta^*)^2c_1dw+\int_{B_T}\frac{(\zeta^*)^{\frac{1}{2}}(\zeta^*)^2c_2}{\frac{Q}{2}-c_3}dw+\int_{B_T}\frac{2c_2^2\xi^{-\frac{1}{2}}(\zeta^*)^2}{m^2(\frac{Q}{2}-c_3)}dw\neq0.
	\end{split}
	\]
	With the  average inequality and Cauchy-Schwartz inequality, we obtain
	\[
	\begin{split}
		&\iint_{R_z}\xi^* a^{-1}{\zeta^*}^2dwdz+3\xi^*\iint_{R_z}a(\zeta^*_z)^2dwdz-3\int_{B_T}\frac{ac_2(\zeta^*)^2}{\frac{Q}{2}-c_3}dw
		\\&+\int_{B_T}2(\zeta^*)^2c_1dw+\int_{B_T}\frac{(\zeta^*)^{\frac{1}{2}}(\zeta^*)^2c_2}{\frac{Q}{2}-c_3}dw+\int_{B_T}\frac{2c_2^2\xi^{-\frac{1}{2}}(\zeta^*)^2}{m^2(\frac{Q}{2}-c_3)}dw\\
		\geq& 2\sqrt3\xi^*\left\{\iint_{R_z} a^{-1}{\zeta^*}^2dwdz\right\}^{\frac{1}{2}}\left\{\iint_{R_z}a(\zeta^*_z)^2dwdz\right\}^{\frac{1}{2}}+3\int_{B_T}\frac{ac_2(\zeta^*)^2}{\frac{Q}{2}-c_3}dw
		\\&+\int_{B_T}2(\zeta^*)^2c_1dw+\int_{B_T}\frac{(\zeta^*)^{\frac{1}{2}}(\zeta^*)^2c_2}{\frac{Q}{2}-c_3}dw+\int_{B_T}\frac{2c_2^2(\xi^*)^{-\frac{1}{2}}(\zeta^*)^2}{m^2(\frac{Q}{2}-c_3)}dw\\
		\geq& \sqrt3\xi^*\int_{B_T}({\zeta^*})^2dw-3\int_{B_T}\frac{ac_2(\zeta^*)^2}{\frac{Q}{2}-c_3}dw
+\int_{B_T}2(\zeta^*)^2c_1dw
\\&+\int_{B_T}\frac{(\zeta^*)^{\frac{1}{2}}(\zeta^*)^2c_2}{\frac{Q}{2}-c_3}dw+\int_{B_T}\frac{2c_2^2(\xi^*)^{-\frac{1}{2}}(\zeta^*)^2}{m^2(\frac{Q}{2}-c_3)}dw\\
		=&\left\{(\sqrt3-\frac{3c_2}{\frac{Q}{2}-c_3})\xi^*+\frac{c_2}{\frac{Q}{2}-c_3}(\xi^*)^{\frac{1}{2}}+\frac{2c_2^2}{m^2(\frac{Q}{2}-c_3)}\cdot\frac{1}{(\xi^*)^{\frac{1}{2}}}+2c_1\right\}\int_{B_T}({\zeta^*})^2dw>0.
	\end{split}
	\]
	The last item is larger than 0, guaranteed by the condition $Q>2c_3+2\sqrt3c_2$.
\end{proof}}

Through the above analysis, the main result of this paper is proven. We conclude it as follows.
\begin{theorem}\label{main result}
	There exists a $C^1$-curve $C_{loc}$ of
	small-amplitude solution $h \in C^{3,\alpha}_
	{per}(\bar R_z)$ and $(u,v,\Omega) \in C^{2,\alpha}_
	{per}(\bar{R}_z)\times C^{2,\alpha}_
	{per}( \bar{R}_z) \times C^{3,\alpha}_
	{per}(R)$. Moreover, the solution curve $C_{loc}$ contains precisely one function that is independent of $w$.
\end{theorem}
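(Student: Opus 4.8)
\emph{Proof plan.}
The strategy is to recast \eqref{h} as the abstract equation $G(\xi,\gamma)=0$ and invoke the Crandall--Rabinowitz theorem (\cite{c-r}, Theorem~1.7) with bifurcation parameter $\xi$ and unknown $\gamma\in\mathbb X$, the laminar family $h=H(\cdot,\xi)$ playing the role of the trivial branch $\gamma\equiv0$. First I would record the elementary hypotheses: $G(\xi,0)=0$ for every $\xi\in(\epsilon_0,z_0)$ because $H(\cdot,\xi)$ solves \eqref{h} by the construction of Section~3.1; the explicit formulas for $G_1,G_2$ show that $G$ is real-analytic — in particular $C^1$ — from $(\epsilon_0,z_0)\times\mathbb X$ into $\mathbb Y$, with Fr\'echet derivative $G_\gamma(\xi,0)$ the operator displayed in \eqref{Ggamma}; and, by Schauder theory for the associated linear oblique-derivative elliptic problem (as in \cite{chu1,chu3}), $G_\gamma(\xi,0):\mathbb X\to\mathbb Y$ is Fredholm of index zero for each admissible $\xi$. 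The mean normalization and the homogeneous condition on $B_B$ are absorbed into the definition of $\mathbb X$, so the argument takes place on the correct function spaces.

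The two nontrivial spectral hypotheses are exactly what the preceding lemmas provide. Under the depth restriction \eqref{depth} there is a unique $\xi^\star>2(c_2+c_3)$ with $\mu(\xi^\star)=-1$ and $\mu(\xi)>-1$ for $\xi>\xi^\star$; at this value the null space $\ker G_\gamma(\xi^\star,0)$ is one-dimensional, spanned by $\zeta^\star(w,z)=M(z)\cos w$ where $M\in C^{3,\alpha}[0,z_0]$ is the normalized eigenfunction of \eqref{Mk} with $\mu=-1$; and, since $G_\gamma(\xi^\star,0)$ has index zero, its range is closed with codimension one. Finally the transversality condition $[G_{\xi\gamma}(\xi^\star,0)](1,\zeta^\star)\notin R(G_\gamma(\xi^\star,0))$ holds, being the non-vanishing of the integral \eqref{ne}, where the inequality $\xi^\star>2(c_2+c_3)$ is used in an essential way.

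With every hypothesis in force, Crandall--Rabinowitz produces $\delta>0$ and a $C^1$-curve $s\mapsto(\xi(s),\gamma(s))$, $|s|<\delta$, with $(\xi(0),\gamma(0))=(\xi^\star,0)$ and $\gamma(s)=s\,\zeta^\star+o(s)$ in $\mathbb X$, such that every solution of $G(\xi,\gamma)=0$ near $(\xi^\star,0)$ either lies on this curve or on the laminar branch. Setting $h_s=H(\cdot,\xi(s))+\gamma(s)$ gives the asserted $C^1$-curve $C_{loc}$ of solutions $h\in C^{3,\alpha}_{per}(\widetilde R_z)$. To return to the physical variables I would invert the height transformation $h=y+d$ and the flow-force change of variables $w=x$, $z=z(x,y)$ — a diffeomorphism since $z_y=1/h_z>0$ — recovering $\Omega(x)=y(x,z_0)$ and, from \eqref{2.16}--\eqref{2.17}, the velocity field $(u,v)$; a standard Schauder bootstrap then upgrades the regularity to $(u,v,\Omega)\in C^{2,\alpha}_{per}(\widetilde R_z)\times C^{2,\alpha}_{per}(\widetilde R_z)\times C^{3,\alpha}_{per}(R)$. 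For the last assertion, $h_0=H(\cdot,\xi^\star)$ is manifestly $w$-independent, while for $0<|s|<\delta$ one has $\gamma(s)=s\,M(z)\cos w+o(s)$ with $M\not\equiv0$, so $h_s$ genuinely depends on $w$; hence $C_{loc}$ contains precisely one $w$-independent function.

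I expect the main obstacle to lie not in the bifurcation-theoretic bookkeeping but in the two analytic facts that underpin it — the one-dimensionality of the kernel, which rests on the Sturm--Liouville minimization of Section~3.2 together with the strict inequality $\mu(\xi)>-1$ for modes $k\geq2$, and the sign of the transversality integral \eqref{ne} — both of which have already been isolated in the preceding lemmas; the remaining care is in checking that $G$ maps $\mathbb X$ into $\mathbb Y$ with the stated differentiability and Fredholm property, and in the regularity transfer back to $(u,v,\Omega)$.
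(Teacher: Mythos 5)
Your proposal follows essentially the same route as the paper: the reformulation as $G(\xi,\gamma)=0$ on the spaces $\mathbb X,\mathbb Y$, the use of the preceding lemmas for the unique $\xi^{\star}$ with $\mu(\xi^{\star})=-1$, the one-dimensional kernel spanned by $M(z)\cos w$, the Fredholm index-zero property via \cite{chu1,chu3}, the transversality condition \eqref{ne}, and then the Crandall--Rabinowitz theorem, with the return to $(u,v,\Omega)$ through the height/flow-force change of variables. Your write-up is correct and, if anything, spells out the bookkeeping (trivial branch, $C^1$-dependence, curve expansion $\gamma(s)=s\,\zeta^{\star}+o(s)$, and the regularity transfer) more explicitly than the paper does.
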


\section*{Conflict of Interest}
\hskip\parindent
The authors declare that they have no conflict of interest.

\section*{Data Availability Statement}
%\hskip\parindent
No data was used for the research in this article.

\section*{Contributions}
We declare that the authors are ranked in alphabetic order of their names and all of them have the same contributions to this paper.

\end{document}